\tikzset{
    every node/.style={inner sep=2pt},
    arr/.style={thin, -{Stealth[scale=1.0]}},
    revarr/.style={thin, {Stealth[scale=1.0]}-},
    loopstyle/.style={thin, -{Stealth[scale=1.0]},
        loop left, min distance=1.5cm, out=135, in=225, distance=1.5cm
    }
}
\newcommand{\RR}{{\rm\bf R}}
\newcommand{\QQ}{{\rm\bf Q}}
\DeclareMathOperator{\SL}{\mathrm{SL}}
\DeclareMathOperator{\Aut}{\mathrm{Aut}}
\DeclareMathOperator{\Gal}{\mathrm {Gal}}
\theoremstyle{plain}
\newtheorem{theorem}{Theorem}[section]
\newtheorem{corollary}[theorem]{Corollary}
\newtheorem{proposition}[theorem]{Proposition}
\theoremstyle{remark}
\newtheorem{remark}[theorem]{Remark}
\newtheorem{definition}[theorem]{Definition}
\begin{document}

    \title{Rational quiver representations:\\tame and wild}

    \author{Fabian Januszewski}
    \address{Institut f\"ur Mathematik, Fakult\"at EIM, Paderborn University, Warburger Str.\ 100, 33098 Paderborn, Germany}
    \email{fabian.januszewski@math.uni-paderborn.de}
    \subjclass[2010]{Primary: 16G20}

    \begin{abstract}
      We study representation finite $K$-rational quivers over fields of characteristic $0$ and their indecomposable representations, exploiting that all Brauer obstructions for descent of representations are trivial in this case. Contrasting the tame case, we give an example of a simple quiver of wild representation type, where we realize every possible Brauer obstruction of a given Galois extension $L/K$ in the category of quiver representations over $L$.
    \end{abstract}

    \maketitle

    {\small
      \tableofcontents
    }

    \section*{Introduction}
    \label{sec:introduction}

In \cite{januszewskirationalquivers} the author introduced the notion of rational quivers and their representations. Initially, the motivation was to gain a different perspective on the rational structures on Harish-Chandra modules previously studied by Harder \cite{harderraghuram}, Harris \cite{harris2013,harris2013erratum} and the author \cite{januszewskirationality}, which in turn were motivated by applications in number theory \cite{januszewskiperiods1,januszewskiperiods2,januszewskilocallyalgebraic}. In \cite{januszewskirationalquivers} the author showed that Gelfand's equivalence between certain blocks of Harish-Chandra modules for $\SL_2(\RR)$ over the complex numbers naturally extends to an equivalence over any field of characteristic $0$, if we furnish the Gelfand quiver with a suitable natural $\QQ$-rational structure.

In this article, we investigate various properties of $K$-rational quivers and their representations. After a quick recap of the main definitions and results of \cite{januszewskirationalquivers} in Section 1, we consider $K$-rational quivers of finite representation type in Section 2. We generalize one direction of Gabriel's celebrated $ADE$ classification result \cite{Gabriel1972,BGP1973,DlabRingel1975}: If the underlying quiver $\Gamma$ of a $K$-rational quiver $\Gamma_K$ is of type $A$, $D$ or $E$, then $\Gamma_K$ is of finite representation type (cf.~Theorem \ref{thm:rationalgabriel}).

In Remark \ref{rmk:conversefailure} we show that the converse is not true: The quiver $\Gamma$ underlying a general $K$-rational quiver $\Gamma_K$ of finite representation type is not necessarily of type $ADE$. In particular, if we assume that the rational structure on $\Gamma_K$ is given by an action of the Galois group of a finite extension $L/K$, then the base change $\Gamma_L$ of $\Gamma_K$ to $L$ need not be of finite representation type if $\Gamma_K$ was of finite representation type.

As an illustration of our generalization of Gabriel's Theorem we discuss in Section 3 a non-abelian Galois action in the context of triality and describe all indecomposable representations of the resulting $K$-rational quiver explicitly.

A crucial fact in this context is that there is no Brauer obstruction in Gabriel's situation of an $ADE$ type quiver (cf.~Corollary \ref{cor:adebrauer}). Section 4 is dedicated to the $2$-loop quiver with trivial rational structure for a given Galois extension $L/K$ in characteristic $0$. We show that every (non-trivial) class in the Brauer group arises as an obstruction for the descent of an absolutely irreducible representation of the $2$-loop quiver in the extension $L/K$.

\medskip    
\noindent{\bf Acknowledgements.} 
The author acknolwedges support by the Deutsche Forschungsgemeinschaft (DFG, German Research Foundation) -- SFB-TRR 358/1 2023 -- 491392403.

\section{Rational quiver representations}

We recall the main definitions from \cite{januszewskirationalquivers}.

\subsubsection*{Rational structures on quivers}

\begin{definition}[{$K$-rational quiver, \cite{januszewskirationalquivers}}]\label{def:rationalquiver}
Let $\Gamma=(V,E,s,t)$ be a classical quiver, consisting of a finite set of vertices $V$, a finite set of edges $E$, and two maps $s,t\colon E\to V$ for the source and target of an edge. For a Galois extension $L/K$, a \emph{$K$-rational structure} on $\Gamma$ is a group homomorphism $\rho\colon\mathrm{Gal}(L/K)\to\Aut(\Gamma)$ such that the induced actions on $V$ and $E$ are continuous (for the discrete topology on $V$ and $E$). This means that for any $\sigma \in \mathrm{Gal}(L/K)$ and $e \in E$, the relations
\begin{equation}
  s(\sigma e)=\sigma s(e)\quad\text{and}\quad t(\sigma e)=\sigma t(e)
  \label{eq:rationalquiver}
\end{equation}
hold. The pair $\Gamma_K = (\Gamma, \rho)$ is called a \emph{$K$-rational quiver}.
\end{definition}

\begin{definition}[{Representation of a rational quiver, \cite{januszewskirationalquivers}}]
A \emph{$K$-rational representation} $M$ of a $K$-rational quiver $\Gamma_K$ (with respect to $L/K$) consists of:
\begin{enumerate}
    \item A family $M=(M(v))_{v\in V}$ of $L$-vector spaces.
    \item For each $\sigma\in\mathrm{Gal}(L/K)$, a family of $\sigma$-linear isomorphisms $\varphi_{v,\sigma}\colon M(v)\to M(\sigma v)$ satisfying the cocycle condition $\varphi_{\tau v,\sigma}\circ\varphi_{v,\tau}=\varphi_{v,\sigma\tau}$.
    \item A family of $L$-linear maps $\phi_e\colon M(s(e))\to M(t(e))$ for each edge $e\in E$, which are commute with the Galois action.
\end{enumerate}
\end{definition}

\subsubsection*{Associated \'etale $K$-species}

\begin{definition}[$K$-species]
A \emph{$K$-species} $S=(L_i,{}_iM_j)_{i,j\in I}$ is a finite collection $(L_i)_{i\in I}$ of division rings containing $K$ in their center, together with an $L_i,L_j$-bimodule ${}_iM_j$ for each pair $i,j\in I$, which is finite-dimensional over $K$.
\end{definition}

This classical definition was refined by the author as follows:

\begin{definition}[{\'Etale $K$-species, \cite{januszewskirationalquivers}}]
An \emph{\'etale $K$-species} is a $K$-species $S=(L_i,{}_iM_j)_{i,j\in I}$ where each $L_i$ is a finite separable extension of $K$ and each ${}_iM_j$ is a commutative $L_i,L_j$-bialgebra which is finite and \'etale as a $K$-algebra.
\end{definition}

To any $K$-rational quiver $\Gamma_K$ we can associate functorially an \'etale $K$-species $S(\Gamma_K) = (L_i, {}_iM_j)_{i,j \in I}$ as follows (cf.\ Definition 2.13 in \cite{januszewskirationalquivers}).
\begin{itemize}
    \item The index set $I$ of the species is the set of orbits of the Galois group $G = \mathrm{Gal}(L/K)$ on the set of vertices $V$, i.e., $I = G \backslash V$.
    \item For each orbit $i \in I$, choose a representative vertex $v_i \in i$. The associated division algebra $L_i$ is the fixed field $L^{G_{v_i}}$, where $G_{v_i}$ is the stabilizer of $v_i$ in $G$. This is a finite separable extension of $K$.
    \item For any two orbits $i,j \in I$, let $E_{ij} = \{e \in E \mid s(e) \in i, t(e) \in j\}$. The $L_i$-$L_j$-bimodule is defined as the direct sum over the orbits in $G \backslash E_{ij}$:
    \[ {}_iM_j := \bigoplus_{\varepsilon \in G \backslash E_{ij}} L^{G_{e_\varepsilon}}, \]
    where $e_\varepsilon$ is a representative of the edge orbit $\varepsilon$.
\end{itemize}
We showed in \cite{januszewskirationalquivers} that this construction gives rise to a categorical anti-equivalence between $K$-rational quivers and \'etale $K$-species (cf.\ Theorem 2.17 in loc.\ cit.) preserving representations, i.\,e.\ the category of representations of $\Gamma_K$ is canonically equivalent to the category of representations of the associated $K$-species $S(\Gamma_K)$ (cf.\ Theorem 2.35 in loc.\ cit.)

\section{$K$-rational quivers of finite representation type}

Let $\Gamma$ be a connected quiver of type $A$, $D$ or $E$. Let $L/K$ denote a Galois extension of fields of characteristic $0$ and let $L^{\rm alg}$ denote an algebraic closure of $L$. By Gabriel's Theorem (cf.\ \cite{Gabriel1972,BGP1973,DlabRingel1975}), the indecomposable representations of $\Gamma$ over $L^{\rm alg}$ are in bijection with set $\Delta^+$ of positive roots of the root system $\Delta$ associated to the Dynkin diagram underlying $\Gamma$. Moreover, the endomorphism ring of each indecomposable representation $V$ is known to isomorphic to $L^{\rm alg}$.

The explicit description of the indecomposable representation via dimension vectors shows that each indecomposable representation of $\Gamma$ descends to a representation over $L$ and by Hilbert 90, this descent is unique up to isomorphim. Now every indecomposable representation $V$ of $\Gamma$ over $L$ decomposes over $L^{\rm alg}$ into a direct sum of indecomposable representations, which each descend to $L$, hence $V$ must agree with one of them. This shows

\begin{theorem}\label{thm:splitclassification}
  Let $\Gamma$ be a connected quiver of type $A$, $D$, or $E$. Then the indecomposable representaitons of $\Gamma$ over any field $L$ of charactereristic $0$ are in canonical bijection with the positive roots of the underyling root system $\Delta$. The endomorphism rings of each indecomposable representation over $L$ is $L$ and each irreducible representation over $L$ is absolutely irreducible.
\end{theorem}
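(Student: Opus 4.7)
The plan is to turn the sketch preceding the statement into a rigorous argument by combining Gabriel's classification over $L^{\mathrm{alg}}$ with Galois descent. The crucial input is the classical fact, already recalled above, that each indecomposable representation of an $ADE$ quiver over an algebraically closed field of characteristic $0$ is a \emph{brick}, i.\,e.\ its endomorphism ring equals the ground field.

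First I would construct, for each positive root $\alpha\in\Delta^+$, a distinguished rational model $V_\alpha$ of the corresponding indecomposable representation. The Bernstein--Gelfand--Ponomarev reflection functor construction (or, equivalently, the explicit normal forms for Dynkin quivers) yields a representation whose structure maps may be chosen with entries in $\ZZ$; in particular, $V_\alpha$ is defined over the prime field $\QQ\subseteq L$. Viewed over $L$, the chain of inclusions $L\subseteq\End_L(V_\alpha)\subseteq\End_{L^{\mathrm{alg}}}(V_\alpha\otimes_L L^{\mathrm{alg}})=L^{\mathrm{alg}}$ identifies $\End_L(V_\alpha)$ with a commutative $L$-subalgebra of $L^{\mathrm{alg}}$ of $L$-dimension one, forcing $\End_L(V_\alpha)=L$. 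In particular $V_\alpha$ admits no nontrivial idempotents and is indecomposable over $L$, with the claimed endomorphism ring.

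For the converse, let $V$ be any indecomposable $L$-representation of $\Gamma$. Gabriel's theorem applied to $V\otimes_L L^{\mathrm{alg}}$ yields an isomorphism
\[
V\otimes_L L^{\mathrm{alg}} \;\cong\; \bigoplus_{\alpha\in\Delta^+}(V_\alpha\otimes_L L^{\mathrm{alg}})^{m_\alpha}
\]
for uniquely determined multiplicities $m_\alpha\in\NN$. Setting $W:=\bigoplus_{\alpha}V_\alpha^{m_\alpha}$, the resulting isomorphism $V\otimes_L L^{\mathrm{alg}}\cong W\otimes_L L^{\mathrm{alg}}$ is already defined over some finite Galois subextension $L'/L$, and its obstruction to descending to $L$ lies in the non-abelian cohomology set $H^1(\Gal(L'/L),\Aut_{L'}(W\otimes_L L'))$. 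Krull--Schmidt together with the brick property yields $\Aut_{L'}(W\otimes_L L')\cong\prod_{\alpha}\GL_{m_\alpha}(L')$, and the generalized Hilbert 90 theorem forces this set to be trivial. Hence $V\cong W$ over $L$, and indecomposability of $V$ forces exactly one $m_\alpha$ to equal $1$. The final assertion is immediate, since the simple representations of $\Gamma$ over $L$ are the one-dimensional representations supported at a single vertex and are manifestly absolutely simple. The main obstacle I anticipate is the descent argument itself: one must correctly identify $\Aut_{L'}(W\otimes_L L')$ as a product of $\GL_{m_\alpha}(L')$ using the brick property, and then invoke non-abelian Hilbert 90 for $\GL_{m_\alpha}$ rather than merely for $\Gm$.
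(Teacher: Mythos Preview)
Your proposal is correct and follows essentially the same route as the paper: Gabriel's classification over $L^{\mathrm{alg}}$ combined with Hilbert~90 descent, exactly as sketched in the discussion preceding the statement. The paper's own proof simply points back to that discussion for the bijection and then defers the endomorphism-ring and absolute-irreducibility claims to Proposition~1.21 of \cite{januszewskirationalquivers}, whereas you supply self-contained arguments for both (the dimension count via flat base change of $\Hom$, and the observation that simples over an acyclic quiver are the vertex simples). Your treatment of the descent step is also more careful than the paper's sketch: you correctly identify $\Aut_{L'}(W\otimes_L L')\cong\prod_\alpha\GL_{m_\alpha}(L')$ via Krull--Schmidt and the brick property, and invoke non-abelian Hilbert~90 for each $\GL_{m_\alpha}$, which makes precise the paper's somewhat elliptical ``hence $V$ must agree with one of them.''
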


\begin{proof}
  The main statement was proven in the preceeding discussion. The remaining claims follow with Proposition 1.21 in \cite{januszewskirationalquivers}.
\end{proof}

\begin{corollary}\label{cor:adebrauer}
  Let $\Gamma_K$ denote a connected quiver over $K$ which is of type $A$, $D$ or $E$, which splits inside a Galois extension $L/K$. Then there are no Brauer obstructions for descent of irreducibles for intermediate fields inside the extension $L/K$.
\end{corollary}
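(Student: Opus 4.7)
The plan is to combine Theorem \ref{thm:splitclassification} with Gabriel's explicit construction of the indecomposables to show that every Galois-stable absolutely irreducible representation of $\Gamma$ admits a canonical descent, which trivializes the obstruction. Fix an intermediate Galois subextension $L'/K'$ of $L/K$ with group $G' = \Gal(L'/K')$, and let $N$ be an absolutely irreducible representation of $\Gamma$ over $L'$ whose isomorphism class is $G'$-stable (the only case in which descent is a meaningful question). By Theorem \ref{thm:splitclassification} we have $\End_{L'}(N) = L'$, so the Brauer obstruction for descent to $K'$ lives in $H^2(G',(L')^\times) = \mathrm{Br}(L'/K')$.

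First, I would realize $N$ via Gabriel's standard model: $N(v) = (L')^{n_v}$ with edge maps $\phi_e$ given by matrices whose entries lie in $\{0,1\}\subseteq\QQ$, determined combinatorially from the positive root $\alpha\in\Delta^+$ corresponding to $N$. Since indecomposables are classified by their dimension vectors and $[N]$ is $G'$-stable, we have $n_{\sigma v} = n_v$ for every $\sigma\in G'$ and $v\in V$. I would then define $\varphi_{v,\sigma}\colon(L')^{n_v}\to(L')^{n_{\sigma v}}$ as the $\sigma$-semilinear map given by the coordinate-wise Galois action followed by a canonical identification of the standard bases; these maps satisfy the cocycle condition strictly.

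The main obstacle is the verification of compatibility with the edge maps, i.e., $\phi_{\sigma e}\circ\varphi_{s(e),\sigma} = \varphi_{t(e),\sigma}\circ\phi_e$ for every $e\in E$. Since the matrix entries of all $\phi_e$ lie in $\QQ$ and are therefore Galois-fixed, this reduces to a purely combinatorial statement about the $G'$-action on the labelling of the standard bases. My approach is to argue that these bases can be chosen functorially from the combinatorial datum $(\Gamma,\alpha)$ alone — for instance via reflection functors applied to the simple representations — yielding a canonical presentation that inherits the $G'$-action automatically. Granted this canonicity, $(N,\{\varphi_{v,\sigma}\})$ is a $K'$-rational representation of $\Gamma_{K'}$ descending $N$, so the class in $\mathrm{Br}(L'/K')$ vanishes. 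Since the subextension $K'\subseteq L'$ of $L/K$ was arbitrary, the corollary follows.
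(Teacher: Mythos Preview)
The paper gives no proof of this corollary; it is stated immediately after Theorem~\ref{thm:splitclassification} as a direct consequence. Your write-up is thus already more careful than the paper's treatment, and you have correctly isolated the one genuinely nontrivial point: when $G'=\Gal(L'/K')$ acts nontrivially on $\Gamma$, the $\QQ$-model $N_0$ of a $G'$-fixed indecomposable is \emph{not} automatically $G'$-equivariant merely because its edge matrices have rational entries. For the $(2,1,1,1)$-indecomposable of $D_4$ under the $\ZZ/3$-rotation of the leaves, for example, the three edge maps out of the central vertex are pairwise non-proportional forms on $\QQ^2$, and the identity maps at the vertices do \emph{not} intertwine them with their cyclic permutations; one must insert a nontrivial order-$3$ element of $\GL_2(\QQ)$ at the centre.

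The soft spot in your argument is the phrase ``granted this canonicity''. Reflection functors are equivariant for quiver automorphisms as \emph{functors}, but the particular sequence of reflections taking a simple to $N_0$ need not be $G'$-stable as a sequence, so equivariance of the individual steps does not hand you a $G'$-equivariant structure on $N_0$ for free. Two ways to close the gap: (i) the comparison isomorphisms $g_\sigma\colon N_0\to\sigma.N_0$ live over $\QQ$ and are unique up to $\End_\QQ(N_0)=\QQ^\times$, so the obstruction is inflated from $H^2(G'',\QQ^\times)$ with $G''=\image(G'\to\Aut(\Gamma))\subseteq S_3$, and one checks by hand for the finitely many pairs $(\Gamma,G'')$ that the projective $G''$-action on $N_0$ lifts to an honest one; or (ii) pass to the associated species $S(\Gamma_{K'})$, whose valued graph is again Dynkin, and invoke Dlab--Ringel to produce the indecomposable over $K'$ directly, which base-changes to $N$. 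Either route completes your outline; the paper itself supplies none of this detail.
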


Combining Theorem \ref{thm:splitclassification} with Corollary \ref{cor:adebrauer} we obtain

\begin{theorem}\label{thm:rationalgabriel}
  Let $\Gamma_K$ denote a quiver over $K$ whose underlying quiver is of $\Gamma$ is of type $A$, $D$ or $E$. Then $\Gamma_K$ is representation finite and the set of isomorphism classes of indecomposable representations over $K$ is in canonical bijection with the set of $\Gal(L/K)$-orbits acting on the set $\Delta_+\subseteq\Delta$ of positive roots.
\end{theorem}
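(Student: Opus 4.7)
The plan is to lift the bijection of Theorem~\ref{thm:splitclassification} from $L$ to $K$ by pure Galois descent, replacing positive roots by $\Gal(L/K)$-orbits on $\Delta_+$. The first ingredient is that the rational structure $\rho\colon\Gal(L/K)\to\Aut(\Gamma)$ permutes vertices and hence dimension vectors, and so, via the Gabriel parametrization over $L$, induces an action of $\Gal(L/K)$ on $\Delta_+$. Equivalently, $\sigma\in\Gal(L/K)$ sends an indecomposable $L$-representation $V$ of class $\alpha$ to its semilinear twist $\sigma\cdot V$, which has class $\sigma\cdot\alpha$. All subsequent steps will be carried out with respect to this action.

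Second, I would analyze decomposition. Any $K$-rational representation $M$, viewed as a plain $L$-representation, splits uniquely as $\bigoplus_{\alpha\in\Delta_+}V_\alpha^{m_\alpha}$. The $\sigma$-linear intertwiners $\varphi_{v,\sigma}$ in the definition of a $K$-rational representation carry the $V_\alpha$-isotypic component onto the $V_{\sigma\alpha}$-isotypic component, so the support $\{\alpha : m_\alpha>0\}$ is a union of $\Gal(L/K)$-orbits and multiplicities are constant along each orbit. A Krull--Schmidt argument applied to the endomorphism ring then shows that a $K$-indecomposable $M$ must be supported on a single orbit $\mathcal{O}$ with common multiplicity $m_\mathcal{O}=1$.

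The third step is to produce, for each orbit $\mathcal{O}\subset\Delta_+$, a unique (up to isomorphism) indecomposable $K$-rational representation $M_\mathcal{O}$ supported on $\mathcal{O}$; this is the step I expect to be the main technical obstacle. Fix $\alpha\in\mathcal{O}$, put $G_\alpha=\mathrm{Stab}_{\Gal(L/K)}(\alpha)$ and $F=L^{G_\alpha}$. Galois descent of $V_\alpha$ from $L$ to $F$ has an existence obstruction in $H^2(G_\alpha,L^\times)\subseteq\mathrm{Br}(F)$, and once existence is secured, isomorphism classes of descents form a torsor under $H^1(G_\alpha,\Aut_L(V_\alpha))$. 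By Theorem~\ref{thm:splitclassification} we have $\Aut_L(V_\alpha)=L^\times$, so Hilbert 90 kills the $H^1$, while Corollary~\ref{cor:adebrauer} annihilates the existence obstruction. This yields a unique $F$-rational descent $W_\alpha$ of $V_\alpha$; via the étale-species equivalence recalled in Section~1, $W_\alpha$ corresponds to the desired indecomposable $K$-rational representation $M_\mathcal{O}$ of $\Gamma_K$. Finiteness of $\Delta_+$ then immediately gives representation finiteness of $\Gamma_K$ and completes the bijection.
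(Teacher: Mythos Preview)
Your argument is essentially the paper's own proof, only spelled out in more detail: the paper simply says the classification is ``a straightforward application of Galois descent'' and then, for each orbit $\mathcal O$ with representative $\alpha$ and stabilizer $G_\alpha$, descends the indecomposable for $\alpha$ to $L^{G_\alpha}$ and takes restriction of scalars down to $K$, remarking that every $K$-indecomposable arises this way ``by the same argument as above''. Your steps one and two make explicit the isotypic decomposition and the Krull--Schmidt reduction to a single orbit with multiplicity one, and your step three unpacks the descent to $F=L^{G_\alpha}$ by naming the two obstructions and citing Theorem~\ref{thm:splitclassification} (for $\Aut_L(V_\alpha)=L^\times$ and Hilbert~90) and Corollary~\ref{cor:adebrauer} (for the vanishing of the Brauer class) exactly as the paper intends.

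One small imprecision: in the last line you pass from the $F$-descent $W_\alpha$ to the $K$-rational indecomposable $M_{\mathcal O}$ by invoking ``the \'etale-species equivalence recalled in Section~1''. That equivalence compares representations of $\Gamma_K$ with representations of the $K$-species $S(\Gamma_K)$; it does not by itself transport an $F$-rational object to a $K$-rational one. The correct functor here is restriction of scalars (equivalently, induction) along $F/K$, which is precisely what the paper uses: one forms $\bigoplus_{\sigma\in\Gal(L/K)/G_\alpha}{}^\sigma W_\alpha$ with the evident semilinear intertwiners, and this is the indecomposable $M_{\mathcal O}$. With that substitution your proof is complete and matches the paper's.
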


The Galois action in the statement is the canonical extension to the root system $\Delta$ of the given action of $\Gal(L/K)$ on the underlying Dynkin diagram.

\begin{proof}
  Assume first that $\Gamma$ is of type $ADE$. The classification of indecomposables for $\Gamma_K$ is a straightforward application of Galois descent: Let $\mathcal O\subseteq\Delta_+$ denote a Galois orbit. Fix a representative $\alpha\in\mathcal O$, write $G_\alpha\subseteq\Gal(L/K)$ for its stabilizer. Then the indecomposable representation associated to $\alpha$ is defined over $L^{G_\alpha}$ and its restriction of scalars inside the extension $L^{G_\alpha}$ to $K$ is easily seen to be an indecomposable representation of $\Gamma_K$ over $K$, independent of the choice of representative $\alpha$. Moreover, every indecomposable representation over $K$ is of this form, again by the same argument as above.
\end{proof}

\begin{remark}
    We remark that the implication $\Gamma$ of type $ADE$ implies $\Gamma_K$ is of finite representation type also is a consequence of Dlab--Ringel's extension of Gabriel's Theorem to $K$-species (cf.~\cite{DlabRingel1975}) combined with Theorem 2.35 in \cite{januszewskirationalquivers}: The category of representations of $\Gamma_K$ is equivalent to that of the associated $K$-species $S(\Gamma_K)$. By Dlab--Ringel, this $K$-species is of finite representation type if and only if its associated valued graph is a Dynkin diagram (i.\,e.\ of type $A, B, C, D, E, F$, or $G$). Considering the possible autorphisms of diagrams of types $A$, $D$ and $E$ and the resulting foldings, the folding process can never create a graph of Euclidean (affine) or wild type, for the quiver structure on $\Gamma$ imposes additional constraints (e.\,g.\ the central arrow of a quiver of type $A_{2n}$ forces the Galois action to be trivial etc.). Consequently, the species $S(\Gamma_K)$ is seen to have a Dynkin diagram as valued graph.
\end{remark}

\begin{remark}[{A finite type $K$-rational quiver of infinite type over $L$}]\label{rmk:conversefailure}
  There is no converse to Theorem \ref{thm:rationalgabriel} in the spirit of Gabriel's $ADE$ characterization. Consider the Kronecker quiver $\Gamma$, which has two vertices $v_1, v_2$ and two parallel edges $a, b: v_1 \to v_2$. This quiver is of tame infinite representation type over any field. Let $L/K$ be a quadratic extension with Galois group $G=\{1, \tau\}$. We define a $K$-rational quiver $\Gamma_K$ via a $K$-rational structure on $\Gamma$ by letting $\tau$ act trivially on the vertices and by permuting the edges: $\tau(v_i) = v_i$, $\tau(a)=b$, and $\tau(b)=a$. The associated \'etale $K$-species $S(\Gamma_K)$ has two nodes, corresponding to the two fixed vertex orbits $\{v_1\}$ and $\{v_2\}$, with associated field $K$ for both. The two edges form a single orbit whose stabilizer is trivial, yielding a $(K,K)$-bimodule $L$. The resulting valued graph is that of type $B_2$:
\begin{center}
\begin{tikzpicture}[node distance=4cm, auto]
    \node (K1) {$K$};
    \node (K2) [right=of K1] {$K$};
    \draw[-{Stealth[scale=1.0]}] (K1) to node[above]{$L$} node[below]{(2,2)} (K2);
\end{tikzpicture}
\end{center}
By the Dlab--Ringel Theorem \cite{DlabRingel1975}, this species is of finite representation type. Thus, the rational quiver $\Gamma_K$ is of finite type over $K$, despite its underlying quiver $\Gamma$ being of infinite type over $L$.
\end{remark}

\section{A rational quiver of type $D_4$ and triality}

We illustrate explore Theorem \ref{thm:rationalgabriel} with an example involving a non-abelian Galois group acting on a quiver of finite representation type in the context of triality: We consider a quiver of type $D_4$ and a rational structure based on a Galois extension with group $S_3$ and classify the indecomposable representations.

\subsubsection*{The $K$-rational quiver}
Let $K$ be a field with characteristic $0$. Let $L/K$ be a Galois extension with Galois group $G = \mathrm{Gal}(L/K) \cong S_3$, the symmetric group on three letters.

We consider the quiver $\Gamma$ with underlying graph the Dynkin diagram $D_4$. We label the central \lq{}root\rq{} vertex $v_0$ and the outer \lq{}leaf\rq{} vertices $v_1, v_2, v_3$. We orient the quiver by having all arrows point outwards from the center, with three edges $e_i: v_0 \to v_i$ for $i=1,2,3$.

We define a $K$-rational structure on $\Gamma$ by letting $S_3$ act naturally on the indices $\{1,2,3\}$. For any permutation $\sigma \in S_3$:
\begin{itemize}
    \item Action on vertices: $\sigma v_0 = v_0$ and $\sigma v_i = v_{\sigma^{-1}(i)}$ for $i \in \{1,2,3\}$.
    \item Action on edges: $\sigma e_i = e_{\sigma^{-1}(i)}$ for $i \in \{1,2,3\}$.
\end{itemize}
This action $\rho_{S_3}$ on $\Gamma$ gives rise to a $K$-rational quiver $\Gamma_K = (\Gamma, \rho_{S_3})$ in the sense of Definition \ref{def:rationalquiver}.

\subsubsection*{The associated \'etale $K$-species}
The associated $K$-species $S(\Gamma_K)$ is given by the following data.
\begin{enumerate}
    \item \textbf{The index set and fields:} The two vertex orbits are $i_0 = \{v_0\}$ and $i_1 = \{v_1, v_2, v_3\}$.
    \begin{itemize}
        \item For $i_0$, the stabilizer is $S_3$, so the field is $L_{i_0} = L^{S_3} = K$.
        \item For $i_1$, the stabilizer of the representative $v_1$ is $H \cong S_2$. The field is $L_{i_1} = L^H = M$.
    \end{itemize}
    \item \textbf{The bimodule:} The single edge orbit $\{e_1, e_2, e_3\}$ connects $i_0$ to $i_1$. The stabilizer of the representative $e_1$ is $H$. The resulting $(K,M)$-bimodule is ${}_{i_0}M_{i_1} = L^H = M$.
\end{enumerate}
The resulting \'etale $K$-species $S(\Gamma_K)$ is of type $G_2$:
\begin{center}
\begin{tikzpicture}[node distance=4cm, auto]
    \node (K_node) {$K$};
    \node (M_node) [right=of K_node] {$M$};
    \draw[-{Stealth[scale=1.0]}] (K_node) to node[above]{${}_K M_M = M$} (M_node);
\end{tikzpicture}
\end{center}
Since the species is of Dynkin type $G_2$, it has 6 indecomposable representations, corresponding to the 6 positive roots of $G_2$. This matches the number of Galois orbits of the 12 positive roots of $D_4$, see below.

\subsubsection*{The Galois action on the $D_4$ root system}

In order to apply Theorem \ref{thm:rationalgabriel} in the given situation, we first describe the Galois action on the root system $D_4$.

We denote the simple roots corresponding to the vertices $\{v_0, v_1, v_2, v_3\}$ by $\{\alpha_0, \alpha_1, \alpha_2, \alpha_3\}$. The action of an element $\sigma \in S_3$ on the quiver induces an action on these simple roots:
\[ \sigma(\alpha_0) = \alpha_0 \quad \text{and} \quad \sigma(\alpha_i) = \alpha_{\sigma^{-1}(i)} \text{ for } i \in \{1,2,3\}. \]
This action extends to the entire set of $12$ positive roots. For clarity, we will identify each positive root with the dimension vector of its corresponding indecomposable representation, written in the basis $(v_0, v_1, v_2, v_3)$. The action of $\sigma$ sends a representation with dimension vector $(d_0, d_1, d_2, d_3)$ to one with dimension vector $(d_0, d_{\sigma^{-1}(1)}, d_{\sigma^{-1}(2)}, d_{\sigma^{-1}(3)})$.

The 12 positive roots (dimension vectors) of $D_4$ are:
\begin{itemize}
    \item $(1,0,0,0)$, $(0,1,0,0)$, $(0,0,1,0)$, $(0,0,0,1)$
    \item $(1,1,0,0)$, $(1,0,1,0)$, $(1,0,0,1)$
    \item $(1,1,1,0)$, $(1,1,0,1)$, $(1,0,1,1)$
    \item $(1,1,1,1)$
    \item $(2,1,1,1)$
\end{itemize}

Applying the permutation action of $S_3$ to these $12$ dimension vectors partitions them into exactly $6$ orbits:

\begin{itemize}
    \item \textbf{Orbit 1} (size $1$)\textbf{:} $(1,0,0,0)$ is fixed by the action.
    \item \textbf{Orbit 2} (size $3$)\textbf{:} $\{(0,1,0,0), (0,0,1,0), (0,0,0,1)\}$ are permuted.
    \item \textbf{Orbit 3} (size $3$)\textbf{:} $\{(1,1,0,0), (1,0,1,0), (1,0,0,1)\}$ are permuted.
    \item \textbf{Orbit 4} (size $3$)\textbf{:} $\{(1,1,1,0), (1,1,0,1), (1,0,1,1)\}$ are permuted.
    \item \textbf{Orbit 5} (size $1$)\textbf{:} $(1,1,1,1)$ is fixed.
    \item \textbf{Orbit 6} (size $1$)\textbf{:} $(2,1,1,1)$ is fixed.
\end{itemize}

This partitioning leads by Theorem \ref{thm:rationalgabriel} directly to the main classification result for the $K$-rational triality quiver.

\begin{theorem}
The $K$-rational quiver $\Gamma_K$ of type $D_4$ with $S_3$-action has exactly $6$ indecomposable representations over the field $K$. Each indecomposable corresponds to one of the $6$ Galois orbits of the $12$ positive roots of $D_4$.
\end{theorem}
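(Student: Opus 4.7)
The statement is an immediate application of Theorem \ref{thm:rationalgabriel}, since the underlying quiver $\Gamma$ is of type $D_4$, hence of Dynkin type $ADE$. The plan is therefore to (i) verify that the hypotheses of Theorem \ref{thm:rationalgabriel} are satisfied in the present situation, (ii) invoke that theorem to reduce the classification of indecomposables over $K$ to the enumeration of $\Gal(L/K)$-orbits on $\Delta_+$, and (iii) quote the explicit orbit computation carried out in the paragraphs immediately preceding the theorem statement.

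Concretely, I would first observe that $\Gamma_K$ is indeed a $K$-rational quiver in the sense of Definition \ref{def:rationalquiver}: all edges $e_i$ are oriented from $v_0$ to $v_i$, and the prescribed $S_3$-action $\sigma e_i = e_{\sigma^{-1}(i)}$, $\sigma v_0 = v_0$, $\sigma v_i = v_{\sigma^{-1}(i)}$ manifestly satisfies $s(\sigma e) = \sigma s(e)$ and $t(\sigma e) = \sigma t(e)$. The underlying Dynkin diagram being $D_4$ in particular lies in the $ADE$ family, so Theorem \ref{thm:rationalgabriel} is applicable and yields a canonical bijection between isomorphism classes of indecomposable representations of $\Gamma_K$ over $K$ and $\Gal(L/K)$-orbits on $\Delta_+$, where the Galois action on $\Delta_+$ is the canonical extension of the action on the simple roots $\alpha_0,\alpha_1,\alpha_2,\alpha_3$ induced by the quiver automorphisms. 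Combining this with the orbit decomposition listed above, which partitions the $12$ positive roots of $D_4$ into exactly $6$ orbits (one of size $1$ containing $(1,0,0,0)$, three of size $3$, and two more singleton orbits $(1,1,1,1)$ and $(2,1,1,1)$), produces the claimed count.

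There is no serious obstacle; the only technical point deserving a line of justification is that the $S_3$-action on $\Delta_+$ obtained by linearly extending the simple root action coincides with the action induced on dimension vectors by the quiver automorphisms. This is immediate because dimension vectors transform covariantly under vertex permutations. As an internal consistency check, I would compare with the species side computed above: the associated \'etale $K$-species $S(\Gamma_K)$ has valued graph of type $G_2$, whose positive root system also contains exactly $6$ elements, matching the orbit count on $\Delta_+$ as predicted by the equivalence of representation categories of Theorem 2.35 in \cite{januszewskirationalquivers}.
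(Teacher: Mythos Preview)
Your proposal is correct and takes essentially the same approach as the paper: the paper's proof consists of the single sentence ``This partitioning leads by Theorem \ref{thm:rationalgabriel} directly to the main classification result for the $K$-rational triality quiver,'' which is precisely your plan of invoking Theorem \ref{thm:rationalgabriel} together with the explicit orbit enumeration given immediately beforehand. Your additional remarks (verifying the rational-structure axioms, noting the compatibility of the $S_3$-action on simple roots with the action on dimension vectors, and cross-checking against the $G_2$ species) are sound elaborations but go beyond what the paper spells out.
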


These $6$ indecomposable $K$-rational representations are described as follows:
\begin{enumerate}
    \item \textbf{The simple representation at $v_0$:} Corresponds to Orbit 1. It is the absolutely irreducible representation $M_K$ over $K$ with dimension vector $(1,0,0,0)$, realized as $M(v_0)=L$ and $M(v_i)=0$ for $i=1,2,3$ with the tautological Galois actions.
    \item \textbf{The indecomposable from the simple leaves:} Corresponds to Orbit 2. Its base change to $L$ is the direct sum of the three simple representations at the leaf vertices. Over $K$ it is the direct sum with the appropriate Galois action permuting the three summands.
    \item \textbf{The indecomposable from the projective leaves:} Corresponds to Orbit 3. Its base change to $L$ is the direct sum of the three representations with dimension vectors $(1,1,0,0), (1,0,1,0), (1,0,0,1)$.
    \item \textbf{The indecomposable of type $(1,1,1,0)$:} Corresponds to Orbit 4. Its base change to $L$ is the direct sum of the three representations in that orbit.
    \item \textbf{The projective indecomposable at $v_0$:} Corresponds to Orbit 5. It is the abolutely irreducible representation over $K$ with dimension vector $(1,1,1,1)$.
    \item \textbf{The injective indecomposable at $v_0$:} Corresponds to Orbit 6. It is the absolutely irreducible representation with dimension vector $(2,1,1,1)$.
\end{enumerate}
This completes the classification.

\subsubsection*{The classification for the associated species}
A representation of $S(\Gamma_K)$ is a pair $(V_K, V_M)$ of a $K$-vector space and an $M$-vector space, with a $K$-linear map $\hat{f}: V_K \to V_M$. In this notation, the indecomposable representations are:
\begin{enumerate}
    \item \textbf{Simple at $K$-node:} $(K, 0)$ with $\hat{f} = 0$. Corresponds to the simple representation at $v_0$ associated to orbit 1.
    \item \textbf{Simple at $M$-node:} $(0, M)$ with $\hat{f} = 0$. Corresponds to orbit 2 for the simple representations at $\{v_1, v_2, v_3\}$.
    \item \textbf{Projective cover of simple at $K$:} $(K, M)$ with $\hat{f}: K \hookrightarrow M$ the canonical inclusion. Corresponds to the orbit of representations with dimension vector $(1,1,0,0)$ and its permutations (orbit 3).
    \item \textbf{Injective hull of simple at $M$:} $(M, M)$ with $\hat{f} = \mathrm{id}_M$. Corresponds to the orbit of representations with dimension vector $(1,1,1,0)$ and its permutations (orbit 4).
    \item \textbf{Intermediate indecomposable:} Let $\ker(\mathrm{Tr}_{M/K})$ be the 2-dim $K$-subspace of $M$ of trace-zero elements. The representation is $(\ker(\mathrm{Tr}_{M/K}), M)$ with $\hat{f}$ being the inclusion. Corresponds to the Galois-fixed representation with dimension vector $(2,1,1,1)$ (orbit 5).
    \item \textbf{Injective hull of simple at $K$:} $(M, K)$ with $\hat{f} = \mathrm{Tr}_{M/K}$. Corresponds to the Galois-fixed projective indecomposable at $v_0$ with dimension vector $(1,1,1,1)$ (orbit 6).
\end{enumerate}
As expected, the 6 indecomposable rational representations of $\Gamma_K$ are in perfect correspondence with the 6 indecomposable representations of its associated \'etale $K$-species of type $G_2$.

\section{The wild case: Realizing arbitrary Brauer obstructions}

Let $K$ denote a field of characteristic $0$. Let $D$ be a central division algebra over $K$ of degree $n$, and let $L/K$ be a Galois splitting field for $D$ of degree $n$ with Galois group $\Gal(L/K)$. We fix a $\Gal(L/K)$-equivariant $L$-algebra isomorphism $\Phi\colon L \otimes_K D \stackrel{\cong}{\longrightarrow} M_n(L)$. By Albert's Theorem, we may choose two elements $d_1, d_2 \in D$ that generate $D$ as a $K$-algebra (see, for instance, \cite[Prop.~11.2]{KMRT98}).

We give a construction that realizes $D$ as the endomorphism ring of a $K$-rational representation of two $2$-loop quiver with trivial rational structure.


\begin{itemize}
    \item \textbf{Quiver $\Gamma$:} A single vertex $v$ and two loops, $e_1$ and $e_2$, at $v$:
\begin{center}
\begin{tikzpicture}
    \node (v) {$v$};
    \draw[-{Stealth[scale=1.0]}] (v) edge[loop left] node[left] {$e_1$} (v);
    \draw[-{Stealth[scale=1.0]}] (v) edge[loop right] node[right] {$e_2$} (v);
\end{tikzpicture}
\end{center}
    \item \textbf{$K$-rational structure:} We endow $\Gamma$ with the trivial rational structure for $L/K$.
\end{itemize}

\begin{remark}
The choice of a quiver with one vertex and two loops is crucial. Its path algebra over $K$ is the free algebra $K\langle e_1, e_2 \rangle$, and it is of wild representation type. The wildness implies that for any finite-dimensional $K$-algebra $A$---including our division algebra $D$---there exists some representation of the quiver whose image algebra is $A$.

However, this existence result from representation theory is too abstract, since we need more information about the representation.
\end{remark}


We define an absolutely irreducible representation $M$ of $\Gamma$ over the field $L$ as follows. Put $M(v) = L^n$ and define the maps for the loops via the splitting isomorphism $\Phi$:
    \[
    \phi_{e_1} := \Phi(1 \otimes d_1) \in M_n(L) \quad \text{and} \quad \phi_{e_2} := \Phi(1 \otimes d_2) \in M_n(L),
    \]
    considered as endomorphisms of $M(v)=L^n$.

\begin{proposition}
The representation $M$ is absolutely irreducible over $L$.
\end{proposition}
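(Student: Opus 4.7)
The plan is to reduce absolute irreducibility of $M$ to the statement that the $L$-subalgebra of $\End_L(M(v)) = M_n(L)$ generated by $\phi_{e_1}$ and $\phi_{e_2}$ coincides with the full matrix algebra $M_n(L)$, and then to verify that generation from the construction.

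First, I would observe that the path algebra over $L$ of the $2$-loop quiver is the free associative $L$-algebra $L\langle e_1, e_2\rangle$, so a representation on an $L$-vector space $V$ is the same datum as an $L$-algebra homomorphism $L\langle e_1, e_2\rangle \to \End_L(V)$. By Burnside's theorem (equivalently, the Jacobson density theorem in the split finite-dimensional setting), such a representation on an $n$-dimensional $V$ is absolutely irreducible if and only if this homomorphism is surjective, i.\,e.\ its image generates $\End_L(V) = M_n(L)$ as an $L$-algebra. This reduces the proposition to showing that the $L$-subalgebra of $M_n(L)$ generated by $\Phi(1\otimes d_1)$ and $\Phi(1\otimes d_2)$ is the full matrix algebra $M_n(L)$.

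For this, I would use the defining property of $d_1, d_2$: they generate $D$ as a $K$-algebra. Consequently, words in $d_1, d_2$ with $K$-coefficients span $D$ over $K$, and hence words in $1\otimes d_1, 1\otimes d_2$ with $L$-coefficients span $L \otimes_K D$ over $L$. Since $\Phi$ is an isomorphism of $L$-algebras, the images $\phi_{e_i} = \Phi(1\otimes d_i)$ generate $\Phi(L\otimes_K D) = M_n(L)$ as an $L$-algebra. Combining with the reduction above yields absolute irreducibility.

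I do not anticipate any serious obstacle in this argument; the essential content is Burnside's theorem together with the elementary fact that $K$-algebra generators of $D$ remain $L$-algebra generators of $L \otimes_K D$ after base change. It is worth noting that no Galois-descent considerations enter at this stage, since the statement concerns $M$ purely as an $L$-linear representation; the equivariance of $\Phi$ and the trivial rational structure only become relevant in the subsequent step, when one uses $M$ to produce a $K$-rational representation whose endomorphism algebra realizes $D$.
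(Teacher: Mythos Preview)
Your proposal is correct and follows essentially the same approach as the paper: both reduce to the observation that $\phi_{e_1},\phi_{e_2}$ generate $M_n(L)$ as an $L$-algebra because $d_1,d_2$ generate $D$ over $K$ and $\Phi$ is an $L$-algebra isomorphism. The only cosmetic difference is that the paper phrases the conclusion directly (an invariant subspace of $L^n$ must be an $M_n(L)$-submodule, hence trivial, and this persists over any extension), whereas you package the same step as an appeal to Burnside's theorem.
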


\begin{proof}
Since $d_1$ and $d_2$ generate $D$ as a $K$-algebra, the matrices $\phi_{e_1}$ and $\phi_{e_2}$ generate the $L$-algebra $\Phi(L \otimes_K D) = M_n(L)$. An invariant subspace of $L^n$ is therefore an $M_n(L)$-submodule. As the standard module $L^n$ is irreducible over $M_n(L)$, $M$ is an irreducible representation. This argument is valid over any extension of $L$, so $M$ is absolutely irreducible.
\end{proof}


Let $M_K$ be the $K$-rational representation corresponding to $M$ obtained by restriction of scalars, i.\,e.\ we consider $M(v)$ as $K$-vector space $M_K(v)$ and $\phi_{e_i}$ as $K$-linear endomorphisms of $M_K(v)$.

\begin{theorem}
  The endomorphism ring of $M_K$ in the category of $K$-rational representations is the opposite algebra of $D$:
\[
\mathrm{End}_{\Gamma_K}(M_K) \cong D^{\mathrm{op}}.
\]
\end{theorem}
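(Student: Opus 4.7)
The plan is to identify $\mathrm{End}_{\Gamma_K}(M_K)$ with the endomorphism ring of a free rank-one left $D$-module, which is a purely algebraic computation.

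First, I would invoke the equivalence, valid for any $K$-rational quiver with trivial Galois action, between $K$-rational representations of $\Gamma_K$ and representations of the underlying quiver $\Gamma$ over $K$; this is a special case of Galois descent and fits into the framework of \cite{januszewskirationalquivers}. Under this equivalence, $M_K$ corresponds to the $K$-vector space $L^n$ (of $K$-dimension $n^2$) equipped with the $K$-linear endomorphisms $\phi_{e_1},\phi_{e_2}$, and so the task reduces to computing the centralizer of $\phi_{e_1},\phi_{e_2}$ inside $\mathrm{End}_K(L^n)$.

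Next, I would show that this centralizer equals $\mathrm{End}_D(L^n)$ for a natural $D$-module structure on $L^n$. Since $d_1,d_2$ generate $D$ as a $K$-algebra and $\Phi$ restricts to an injective $K$-algebra homomorphism $D\hookrightarrow M_n(L)$ via $d\mapsto \Phi(1\otimes d)$, the $K$-subalgebra of $M_n(L)$ generated by $\phi_{e_1},\phi_{e_2}$ is precisely $\Phi(1\otimes D)\cong D$. Consequently the action of the path algebra $A:=K\langle e_1,e_2\rangle$ on $L^n$ factors through a surjection $A\twoheadrightarrow D$, and its centralizer in $\mathrm{End}_K(L^n)$ coincides with $\mathrm{End}_D(L^n)$.

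Finally, I would determine $\mathrm{End}_D(L^n)$ by identifying $L^n$ as a $D$-module up to isomorphism. Because $D$ is a division algebra, every left $D$-module is free, and a $K$-dimension count gives $\dim_K L^n = n\cdot [L:K] = n^2 = \dim_K D$, forcing $L^n\cong D$ as left $D$-modules. The endomorphism ring of the rank-one free module $D$ over itself is $D^{\mathrm{op}}$, acting by right multiplication, which yields the claimed isomorphism.

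I expect the main obstacle to be conceptual rather than technical: ensuring that the \emph{restriction of scalars} in the statement is compatible with the notion of $K$-rational representation, so that the computation above really captures $\mathrm{End}_{\Gamma_K}(M_K)$ rather than, say, $\mathrm{End}_{A\otimes_K L}(M)$, which would produce only $L$. Once this compatibility is unfolded through the trivial-structure descent equivalence of \cite{januszewskirationalquivers}, the remainder is elementary.
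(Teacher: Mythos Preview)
Your proposal is correct and follows essentially the same route as the paper: both reduce $\mathrm{End}_{\Gamma_K}(M_K)$ to the centralizer in $\mathrm{End}_K(L^n)$ of the $K$-subalgebra generated by $\phi_{e_1},\phi_{e_2}$, and then identify that subalgebra with $D$ via $\Phi$. The only difference is in the last step: the paper invokes the Double Centralizer Theorem to conclude that the centralizer is $D^{\mathrm{op}}$, whereas you compute $\mathrm{End}_D(L^n)$ directly by observing that $L^n$ is a free rank-one left $D$-module via a dimension count---a slightly more elementary and self-contained argument that avoids appealing to the general centralizer machinery for central simple algebras.
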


\begin{proof}
  The $K$-rational representation $M_K$ corresponds to the pair
  \[
  (M, (\varphi_\sigma)_{\sigma\in \Gal(L/K)}),
  \]
  where the semi-linear maps $\varphi_\sigma$ are the standard entry-wise action of $\Gal(L/K)$ on $L^n$. An endomorphism of $M_K$ is an endomorphism of the underlying module over the $K$-path algebra $P_K(\Gamma)$. The endomorphism ring is the centralizer of the image of this path algebra in $\mathrm{End}_K(L^n)$.

Let $A = K\langle \phi_{e_1}, \phi_{e_2} \rangle \subset M_n(L)$ be the $K$-algebra generated by the representation maps. By construction, $A \cong D$. The endomorphism ring is the centralizer of this simple subalgebra:
\[
\mathrm{End}_{\Gamma_K}(M_K) = C_{\mathrm{End}_K(L^n)}(A).
\]
By the Double Centralizer Theorem, this centralizer is isomorphic to $A^{\mathrm{op}}\cong D^{\mathrm{op}}$, hence $\mathrm{End}_{\Gamma_K}(M_K) \cong D^{\mathrm{op}}$.
\end{proof}

This construction realizes the Brauer obstruction given by $D$: $M$ does not descend to $K$ unless $D=K$, the corresponding obstruction is $[D]\in H^2(\Gal(L/K);L^\times)$.

\end{document}